\newcommand{\F}{\mathcal{F}}
\newcommand{\R}{\mathds R}
\newcommand{\conv}{\operatorname{conv}}
\newcommand{\id}{\operatorname{Id}}
\newcommand{\SO}{\mathsf{SO}}
\renewcommand{\O}{\mathsf O}
\newtheorem{theorem}{Theorem}[]
\newtheorem{lemma}[theorem]{Lemma}
\newtheorem{proposition}[theorem]{Proposition}
\newtheorem{corollary}[theorem]{Corollary}
\newtheorem{mainthm}{\sc Theorem}
\newtheorem{maincor}[mainthm]{\sc Corollary}
\newtheorem{question}[theorem]{Question}
\theoremstyle{definition}
\newtheorem{definition}[theorem]{Definition}
\theoremstyle{remark}
\newtheorem{remark}[theorem]{Remark}
\newtheorem{example}[theorem]{Example}
\title{A geometric take on Kostant's Convexity Theorem}
\author[R. A. E. Mendes]{Ricardo A. E. Mendes}
\address{University of Oklahoma\newline
\indent Department of Mathematics\newline
\indent 601 Elm Ave\newline
\indent Norman, OK, 73019-3103, USA}
\email{ricardo.mendes@ou.edu}
\thanks{The author has been supported by the NSF grant DMS-2005373}
\subjclass[2020]{57S15, 51F99}
\keywords{Orbit space, submetry, convexity, polar action}
\begin{document}

\begin{abstract}
Given a compact Lie group $G$ and an orthogonal $G$-representation $V$, we give a purely metric criterion for a closed subset of the orbit space $V/G$ to have convex pre-image in $V$. In fact, this also holds with the natural quotient map $V\to V/G$ replaced with an arbitrary submetry $V\to X$.

In this context, we introduce a notion of ``fat section'' which generalizes polar representations, representations of non-trivial copolarity, and isoparametric foliations.

We show that Kostant's Convexity Theorem partially generalizes from polar representations to submetries with a fat section, and give examples illustrating that it does not fully generalize to this situation.
\end{abstract}

\maketitle

\section{Introduction}

B. Kostant's celebrated ``Convexity Theorem'' \cite[Theorem 8.2]{Kostant73} can be phrased as follows:
\begin{theorem}[Kostant]
\label{T:Kostant}
Let $V$ be a real orthogonal representation of the compact group $G$, with connected orbits. Assume the representation is polar, with section $\Sigma\subset V$ and Weyl group $W$ acting on $\Sigma$. Then 
\[\pi_\Sigma(G\cdot v) = \conv (W\cdot v)\]
for all $v\in\Sigma$. Here $\pi_\Sigma$  denotes the orthogonal projection onto $\Sigma$, $\conv(\cdot)$ the convex hull, $G\cdot v$ the $G$-orbit through $v$, and $W\cdot v=(G\cdot v)\cap\Sigma$ the $W$-orbit through $v$.
\end{theorem}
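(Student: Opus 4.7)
The plan is to establish the two inclusions separately, combining a geometric convexity criterion for one direction with a classical extremal argument for the other. Both $\pi_\Sigma(G\cdot v)$ and $\conv(W\cdot v)$ are compact and $W$-invariant subsets of $\Sigma$ (the former because $W=N_G(\Sigma)/Z_G(\Sigma)$ acts on $\Sigma$ and commutes with $\pi_\Sigma$), and the inclusion $W\cdot v\subseteq\pi_\Sigma(G\cdot v)$ is immediate since points of $\Sigma$ are fixed by $\pi_\Sigma$. So proving $\conv(W\cdot v)\subseteq\pi_\Sigma(G\cdot v)$ reduces to showing that $A:=\pi_\Sigma(G\cdot v)$ is convex in $\Sigma$.

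For convexity I would pass to $V/G$. Set $\widetilde A:=G\cdot A\subseteq V$, which is the preimage of the descent $\bar A\subseteq V/G$ under the quotient map. Using the $W$-invariance of $A$ together with the polar identity $G\cdot\sigma\cap\Sigma=W\cdot\sigma$ for $\sigma\in\Sigma$, one gets $\widetilde A\cap\Sigma=A$, so convexity of $\widetilde A$ in $V$ forces convexity of $A$ in $\Sigma$. Convexity of $\widetilde A$ is then to be obtained by invoking the purely metric criterion announced in the abstract: one verifies the criterion for $\bar A\subseteq V/G$ using the concrete polar-theoretic description $V/G\cong\Sigma/W$ and the fact that minimal horizontal lifts of geodesics in $V/G$ are straight segments contained in sections.

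For the reverse inclusion, Hahn--Banach reduces $\pi_\Sigma(G\cdot v)\subseteq\conv(W\cdot v)$ to proving that, for every $u\in\Sigma$,
\[
\max_{g\in G}\,\langle u,\pi_\Sigma(g\cdot v)\rangle\;\le\;\max_{w\in W}\,\langle u,w\cdot v\rangle.
\]
Self-adjointness of $\pi_\Sigma$ combined with $u\in\Sigma$ gives $\langle u,\pi_\Sigma(g\cdot v)\rangle=\langle u,g\cdot v\rangle$, and a critical point of $g\mapsto\langle u,g\cdot v\rangle$ is a point $g\cdot v$ at which $u$ is normal to the orbit $G\cdot v$. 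In a polar representation the normal space to an orbit at a regular point is a section through that point, so $u$ and $g\cdot v$ lie in a common section; a $G$-conjugation then brings them simultaneously into $\Sigma$ and identifies $g\cdot v$ with an element of $W\cdot v$.

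The hard part is the convexity step: verifying the metric criterion for the particular descended set $\bar A$, and in particular handling points of $\bar A$ corresponding to non-principal orbits, where several sections meet and the horizontal geometry is less rigid. The extremal argument of the last paragraph is essentially classical and, once one reduces to the regular locus (e.g.\ by a slice argument at singular $u$), presents no real surprise; the whole novelty of the proof lies in deriving convexity from a metric property of $\bar A$ rather than from an explicit parametrization.
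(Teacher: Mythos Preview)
The paper does not give its own proof of Theorem~\ref{T:Kostant}; the result is quoted with attribution to Kostant. What the paper \emph{does} establish is the inclusion $\pi_\Sigma(G\cdot v)\subset\conv(W\cdot v)$, as the special case of Theorem~\ref{MT:fatsection}(a), and it states explicitly (just after that theorem) that the reverse inclusion---equivalently, convexity of $\pi_\Sigma(G\cdot v)$---does not follow from its methods and in fact fails for general fat sections (Section~\ref{S:examples}).

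For the inclusion $\pi_\Sigma(G\cdot v)\subset\conv(W\cdot v)$, your critical-point argument is the classical one and is essentially correct. The paper takes a different route: it shows that support functions of saturated sets are basic (Proposition~\ref{P:saturation}), hence polars and convex hulls of saturated sets are saturated, and then combines the Bipolar Theorem with Corollary~\ref{MC:isometry} to obtain $\pi_\Sigma(\conv(G\cdot v))=\conv(W\cdot v)$, whence $\pi_\Sigma(G\cdot v)\subset\conv(W\cdot v)$. Your argument is shorter in the polar case but uses the smooth orbit structure; the paper's argument is what generalizes to arbitrary submetries with a fat section.

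Your plan for the convexity of $A=\pi_\Sigma(G\cdot v)$, however, is circular. You propose to verify the metric criterion of Theorem~\ref{MT:quotient} for $\bar A\subset V/G$ by passing through the isometry $V/G\cong\Sigma/W$. But Theorem~\ref{MT:quotient} applied to the submetry $\Sigma\to\Sigma/W$ says exactly that $\bar A$ satisfies the criterion if and only if its preimage in $\Sigma$ is convex---and that preimage is $A$ itself (you already checked $\widetilde A\cap\Sigma=A$). So the criterion holds for $\bar A$ if and only if $A$ is convex, which is what you set out to prove. This is not a matter of handling singular strata more carefully; the whole strategy reduces the question to itself. Some input specific to the polar situation with connected orbits (as in Kostant's or Terng's arguments) is genuinely needed here, and Theorem~\ref{MT:quotient} alone cannot supply it.
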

Recall that the $G$-representation $V$ is called polar with section $\Sigma$ if $\Sigma$ is a linear subspace of $V$ that intersects all $G$-orbits orthogonally (see \cite{Dadok85}). The formulation above is equivalent to the original one because, up to having the same orbits, the class of polar representations with connected orbits coincides with the class of isotropy representations of symmetric spaces (see \cite[Proposition 6]{Dadok85}). Two notable special cases are the adjoint representation of a connected compact Lie group $G$ on its Lie algebra $V$, with $\Sigma$  the Lie algebra of a maximal torus and $W$ the usual Weyl group; and the Schur--Horn Theorem \todo{explain or give reference?} concerning the diagonal entries of symmetric matrices with a given set of eigenvalues (see \cite[page 150]{LRT99}).

An important consequence is that the study of $G$-invariant convex subsets of $V$ is in some sense reduced to the study of $W$-invariant convex subsets of $\Sigma$:
\begin{corollary}
\label{C:Kostant}
With assumptions and notations as in Theorem \ref{T:Kostant}:
\begin{enumerate}[(a)] 
\item For every $G$-invariant convex subset $K\subset V$,
\[\pi_\Sigma(K)=K\cap\Sigma.\]
\item The map $K\mapsto \pi_\Sigma(K)=K\cap\Sigma$ is a bijection between $G$-invariant convex subsets of $V$ and $W$-invariant convex subsets of $\Sigma$.
\end{enumerate}
\end{corollary}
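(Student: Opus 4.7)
For part (a), the inclusion $K\cap\Sigma\subset\pi_\Sigma(K)$ is immediate since $\pi_\Sigma$ restricts to the identity on $\Sigma$. For the other direction, I would take $v\in K$ and choose $v_0\in G\cdot v\cap\Sigma$, which is nonempty because $\Sigma$ is a section. Theorem~\ref{T:Kostant} then gives
\[
\pi_\Sigma(v)\in\pi_\Sigma(G\cdot v_0)=\conv(W\cdot v_0).
\]
Since $K$ is $G$-invariant, $W\cdot v_0\subset G\cdot v_0\subset K$, and since $W\cdot v_0\subset\Sigma$ we also have $W\cdot v_0\subset K\cap\Sigma$. The set $K\cap\Sigma$ is convex (intersection of a convex set with a linear subspace), so $\conv(W\cdot v_0)\subset K\cap\Sigma$, whence $\pi_\Sigma(v)\in K\cap\Sigma$.

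For part (b), the plan is to exhibit an explicit inverse. For a $W$-invariant convex $K'\subset\Sigma$, define
\[
\Phi(K'):=\{v\in V : \pi_\Sigma(G\cdot v)\subset K'\}.
\]
This set is manifestly $G$-invariant. For convexity, observe that for $v_1,v_2\in\Phi(K')$, $t\in[0,1]$, and any $g\in G$, linearity of $\pi_\Sigma$ and of the $G$-action gives
\[
\pi_\Sigma\bigl(g(tv_1+(1-t)v_2)\bigr)=t\,\pi_\Sigma(gv_1)+(1-t)\,\pi_\Sigma(gv_2)\in K'
\]
by convexity of $K'$; letting $g$ range over $G$ shows $tv_1+(1-t)v_2\in\Phi(K')$.

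It then remains to verify that $\Phi$ is a two-sided inverse to $K\mapsto K\cap\Sigma$. For $v\in\Sigma$, Theorem~\ref{T:Kostant} identifies $\pi_\Sigma(G\cdot v)=\conv(W\cdot v)$, which sits inside $K'$ iff $v\in K'$ (by $W$-invariance and convexity of $K'$); hence $\Phi(K')\cap\Sigma=K'$. Conversely, for a $G$-invariant convex $K$, part (a) gives $\pi_\Sigma(G\cdot v)\subset\pi_\Sigma(K)=K\cap\Sigma$ for every $v\in K$, so $K\subset\Phi(K\cap\Sigma)$; and if $v\in\Phi(K\cap\Sigma)$, choosing $v_0\in G\cdot v\cap\Sigma$ gives $v_0=\pi_\Sigma(v_0)\in K\cap\Sigma\subset K$, and therefore $v\in G\cdot v_0\subset K$ by $G$-invariance.

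The substantive content is entirely in Theorem~\ref{T:Kostant}; once that is available, the corollary is essentially a formal manipulation. The only step that needs a moment's care is the convexity of $\Phi(K')$, which in turn hinges on the linearity of $\pi_\Sigma$ together with the linearity of the $G$-action — both features of the orthogonal representation setting that would need to be revisited in the more general submetry framework promised in the abstract.
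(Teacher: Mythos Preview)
Your argument is correct. For part~(a) it coincides with the paper's line of reasoning: the paper does not prove Corollary~\ref{C:Kostant}(a) separately but establishes the general version in Theorem~\ref{MT:fatsection}(b), and that proof is exactly your argument (pick a point of the orbit lying in $\Sigma$, apply the Kostant-type inclusion $\pi_\Sigma(F)\subset\conv(F\cap\Sigma)$, and use convexity of $K\cap\Sigma$).

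For part~(b) your route genuinely differs from the paper's. The paper (in Theorem~\ref{MT:fatsection}(c)) uses as inverse the saturation map $B\mapsto\sigma^{-1}(\sigma(B))$; to show this is convex it appeals to Proposition~\ref{P:saturation}(b), that the convex hull of a saturated set is again saturated, which is proved via support functions and the Bipolar/Busemann machinery. Your $\Phi(K')$ turns out to equal the $G$-saturation $G\cdot K'$, but you prove its convexity directly and elementarily from the linearity of $\pi_\Sigma$ and of the $G$-action. This is shorter and avoids the support-function apparatus, at the cost you yourself identify: it does not transport to the submetry/fat-section setting, where there is no linear group action and the paper must instead rely on Proposition~\ref{P:saturation}.
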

For a proof of (a), see e.g. \cite[Corollary 3.4]{KS20}. The proof of (b) is straightforward (see proof of Theorem \ref{MT:fatsection} below for a generalization).

In terms of quotient spaces, subsets of $V/G$ (resp. $\Sigma/W$) correspond to $G$-invariant subsets of $V$ (resp. $W$-invariant subsets of $\Sigma$). Thus Corollary \ref{C:Kostant}(b) states that the isometry $\Sigma /W\to V/G$ induced by the inclusion $\Sigma\to V$ preserves the collection of subsets that have convex pre-images in $\Sigma, V$. This is an immediate consequence of our first main result, which gives a \emph{purely metric} criterion for a closed subset $S\subset V/G$ to have convex pre-image in $V$, for \emph{any} (not necessarily polar) $G$-representation $V$. More generally, one can also replace the map $V\to V/G$ with any submetry, that is, any map $V\to X$ to a metric space $X$ that takes metric balls to metric balls of the same radius (see Subsection \ref{SS:submetries} below):

\begin{mainthm}
\label{MT:quotient}
Let $V$ be a finite-dimensional real vector space with an inner product, $X$ a metric space, and $\sigma\colon V\to X$ a submetry. Let $S\subset X$ be a closed subset, and denote by $\delta_S$
the distance function to $S$. Then $\sigma^{-1}(S)$ is a convex subset of $V$ if and only if the map $\delta_S:\overline{X\setminus S}\to [0,\infty)$ is a submetry.


\end{mainthm}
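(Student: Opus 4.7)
The plan is to reduce the statement to a purely Euclidean fact about the closed set $C := \sigma^{-1}(S) \subset V$ and its distance function $F(v) := d(v, C)$, and then prove that fact directly.

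First I would use the defining property $\sigma(B_r(v)) = B_r(\sigma(v))$ of a submetry (together with the $1$-Lipschitz continuity it implies) to show two things: that $F(v) = f(\sigma(v))$ for $v \in V \setminus C$, and that the Dini upper derivative of $F$ at $v$ equals the Dini upper derivative of $f$ at $\sigma(v)$. The inequality $d(\sigma(w),\sigma(v)) \leq |w-v|$ gives one direction of the latter; for the other, one lifts a sequence $y_n \to \sigma(v)$ realizing the limsup in $X$ to $w_n \to v$ with $\sigma(w_n) = y_n$ and $|w_n - v| = d(y_n, \sigma(v))$, whose existence follows from the submetry property and finite-dimensionality of $V$.

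With this reduction, the theorem becomes: for closed $C \subset V$, $C$ is convex iff the Dini upper derivative of $F(\cdot) = d(\cdot, C)$ equals $1$ at every $v \notin C$. The forward direction is standard convex geometry: if $C$ is closed convex and $p = \pi_C(v)$, then $p$ remains the nearest point of $v+tu$ for $u = (v-p)/|v-p|$ and $t \geq 0$, so $F(v+tu) = F(v) + t$, and combined with $1$-Lipschitzness this gives Dini derivative exactly $1$. For the converse I would show that the hypothesis forces every $v \notin C$ to have a unique nearest point in $C$, whence $C$ is convex by Motzkin's classical theorem on closed sets in Euclidean space. The key step is the estimate
\[
\limsup_{w \to v} \frac{F(w)-F(v)}{|w-v|} \;\leq\; \sup_{|u|=1} \inf_{n \in N(v)} \langle u,n\rangle,
\]
where $N(v) := \{(v-p)/F(v) : p \text{ is a nearest point of } v \text{ in } C\} \subset S^{d-1}$ is nonempty and compact, obtained by expanding $F(v+tu) \leq |v+tu-p|$ to first order in $t$ for each foot point $p$. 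A short compactness-continuity argument on the unit sphere shows this supremum equals $1$ iff $N(v)$ is a singleton.

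The main obstacle I expect is the converse direction, specifically the geometric estimate that extracts uniqueness of the nearest point from the Dini condition; this replaces the more direct projection-based argument available in the polar case. The rest — the submetry reduction and the invocation of Motzkin's theorem — is routine in comparison.
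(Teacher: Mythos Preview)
Your proposal is correct, and the reduction step (that the Dini upper derivative of $F$ at $v$ equals that of $f$ at $\sigma(v)$) as well as the forward implication match the paper exactly. The converse direction, however, is handled differently. The paper's main proof uses Alexandrov geometry: it runs a gradient curve $\alpha$ for $\tilde f$ starting at $v$, observes that $\|\nabla \tilde f\|=1$ forces $\alpha$ to be unit-speed with $\tilde f(\alpha(t))=l+t$, so the concatenation of $\alpha$ with a minimizing segment from $\sigma^{-1}(S)$ to $v$ is itself a segment; this yields a unique foot point and a ray along which $\tilde f$ is linear, and convexity is then obtained by a direct contradiction argument using this ray.

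Your argument instead establishes the first-variation estimate
\[
\limsup_{w\to v}\frac{F(w)-F(v)}{|w-v|}\ \le\ \sup_{|u|=1}\ \inf_{n\in N(v)}\langle u,n\rangle,
\]
deduces that $N(v)$ is a singleton whenever the left side equals $1$, and then invokes Motzkin's theorem. This is precisely the alternative route the paper sketches in the Remark following its proof (phrased there via the formula $d_v\tilde f(u)=\min_{\xi\in\Uparrow_v^{\sigma^{-1}(S)}}\langle-\xi,u\rangle$ and the characterisation of convex sets as sets of infinite reach). Your version is in fact slightly leaner, since you only need the inequality above, not the full first-variation equality. The trade-off is that the paper's gradient-curve argument is self-contained and adapts immediately to the positive-reach variant noted afterwards, whereas your route is more elementary but outsources the final step to Motzkin's theorem.
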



Theorem \ref{MT:quotient} fits in a general theme: the interplay between the geometry of the isometric $G$-action on $V$ (or, more generally, some Riemannian manifold) and the metric geometry of the orbit space $V/G$. This is a traditional topic explored extensively in the literature, for example in \cite{HK89, Grove02, GL14, GLLM19, Mendes20, GKW21}.

Going beyond the polar case, whenever a $G$-representation $V$ admits a ``reduction'', that is, a $G'$-representation $V'$ such that $V/G$ is isometric to $V'/G'$ (called ``quotient-equivalent'' in \cite{GL14}), Theorem \ref{MT:quotient} implies that there is a bijection between the respective closed invariant convex subsets. More generally:
\begin{maincor}
\label{MC:isometry}
Let $V, V'$ be  finite-dimensional real vector spaces with inner product, $X,X'$  metric spaces, and $\sigma\colon V\to X$ and $\sigma'\colon V'\to X'$ be submetries. Let $\varphi\colon X\to X'$ be an isometry, and $S\subset X$ be a closed subset. Then $\sigma^{-1}(S)$ is convex if and only if $(\sigma')^{-1}(\varphi(S))$ is convex. Thus $\varphi$ induces a bijection between closed convex $\sigma$-saturated subsets of $V$ and closed convex $\sigma'$-saturated subsets of $V'$.
\end{maincor}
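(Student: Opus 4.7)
The plan is to reduce everything to Theorem \ref{MT:quotient}, whose characterization of convex preimages is phrased purely in terms of the target metric space $X$ and the subset $S\subset X$; neither $\sigma$ nor $V$ appears in the criterion.

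First I would apply Theorem \ref{MT:quotient} to $\sigma$ to restate convexity of $\sigma^{-1}(S)$ as the condition that the distance function $f\colon X\setminus S\to(0,\infty)$ to $S$ satisfies
\[
\limsup_{y\to x}\frac{f(y)-f(x)}{d(y,x)}=1
\]
at every $x\in X\setminus S$. I would then transport this condition across $\varphi$. Set $S'=\varphi(S)$; since $\varphi$ is a bijective isometry, $S'$ is closed in $X'$, and the distance function $f'$ to $S'$ satisfies $f'\circ\varphi=f$. Using $d'(\varphi(y),\varphi(x))=d(y,x)$ and substituting $x'=\varphi(x)$, $y'=\varphi(y)$ shows that the limsup condition for $f'$ at $\varphi(x)$ is equivalent to the one for $f$ at $x$. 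Applying Theorem \ref{MT:quotient} again, this time to $\sigma'$ and $S'$, yields the first assertion.

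For the bijection, I would first record that any submetry is open by definition and $1$-Lipschitz (hence continuous), so it is a quotient map; consequently the image under $\sigma$ of a closed $\sigma$-saturated subset of $V$ is closed in $X$. The assignment $K\mapsto \sigma(K)$ is therefore a bijection between closed $\sigma$-saturated subsets of $V$ and closed subsets of $X$, with inverse $S\mapsto \sigma^{-1}(S)$, and under this bijection convexity of $K$ corresponds to convexity of $\sigma^{-1}(\sigma(K))$. The same holds on the $\sigma'$ side. Composing with the obvious bijection $S\mapsto\varphi(S)$ between closed subsets of $X$ and of $X'$, which preserves the ``convex preimage'' property by the first part of the corollary, produces the desired bijection $K\mapsto(\sigma')^{-1}(\varphi(\sigma(K)))$.

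The only non-formal ingredient is Theorem \ref{MT:quotient}; everything else is bookkeeping about how an isometry transports metric data. The one subtlety worth double-checking is that submetries are open (and hence quotient) maps, so that closedness of $\sigma(K)$ is automatic from closedness of $K$; this is standard and follows directly from the submetry definition.
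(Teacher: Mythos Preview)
Your proposal is correct and is exactly the argument the paper has in mind: the corollary is stated as an immediate consequence of Theorem~\ref{MT:quotient}, and the paper gives no separate proof. Your transport of the metric criterion across the isometry $\varphi$, together with the bookkeeping that submetries are open quotient maps so that closed saturated sets correspond to closed subsets of the base, is precisely the intended deduction.
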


Having generalized Corollary \ref{C:Kostant}(b), we investigate the extent to which Theorem \ref{MT:quotient} can be used to generalize Theorem \ref{T:Kostant} and Corollary \ref{C:Kostant}(a). Since these concern the orthogonal projection $\pi_\Sigma$, we need the ``reduction'' to be induced by a vector subspace $\Sigma\subset V$. More precisely, we introduce the following definition: Given a submetry $\sigma\colon  V\to X$, such that $\{0\}$ is a fiber, we call a subspace $\Sigma\subset V$ a \emph{fat section}\footnote{Term borrowed from \cite{Magata09,Magata10}.} for $\sigma$ if $\sigma|_\Sigma\colon \Sigma\to X$ is a submetry. Besides polar representations, this generalizes a number of other objects, including representations of non-trivial copolarity, principal isotropy group reductions, and isoparametric foliations (see Section \ref{S:fatsections}). Our second main result is: 
\begin{mainthm}
\label{MT:fatsection}
Let $\sigma\colon  V\to X$ be a submetry such that $\{0\}$ is a fiber, with fat section $\Sigma\subset V$. Denote by $\pi_\Sigma$ the orthogonal projection onto $\Sigma$. Then:
\begin{enumerate}[(a)]
\item For any $\sigma$-fiber $F\subset V$,
\[\conv(F)\cap\Sigma=\pi_\Sigma(\conv(F))=\conv(F\cap\Sigma).\]
In particular,
\[\pi_\Sigma(F)\subset \conv(F\cap\Sigma).\]
\item For every $\sigma$-saturated convex set $K$, 
\[\pi_\Sigma(K)=K\cap\Sigma.\]
\item The map $K\mapsto \pi_\Sigma(K)=K\cap\Sigma$ is a bijection between $\sigma$-saturated convex subsets of $V$ and $\sigma|_\Sigma$-saturated convex subsets of $\Sigma$.
\end{enumerate}
\end{mainthm}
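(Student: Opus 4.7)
The plan is to establish (a) first, from which (b) follows almost immediately, and then (c) from (a), (b), and Corollary~\ref{MC:isometry}. Observe that the three equalities in (a) are together equivalent to the single containment $\pi_\Sigma(F)\subset\conv(F\cap\Sigma)$ (the ``in particular'' statement): since $\pi_\Sigma$ is linear, $\pi_\Sigma(\conv(F))=\conv(\pi_\Sigma(F))$, and the trivial chain $\conv(F\cap\Sigma)\subset\conv(F)\cap\Sigma\subset\pi_\Sigma(\conv(F))$ collapses to equalities exactly when $\pi_\Sigma(F)\subset\conv(F\cap\Sigma)$. Note also $F\cap\Sigma\neq\emptyset$, since $\sigma|_\Sigma$ is a submetry, hence surjective onto $X$.

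To prove the key containment I would aim at an intermediate lemma: $\conv(F\cap\Sigma)\subset\Sigma$ is $\sigma|_\Sigma$-saturated. Granted this, Corollary~\ref{MC:isometry} applied to the pair of submetries $\sigma$ and $\sigma|_\Sigma$ (both onto $X$, with the identity isometry between orbit spaces) implies that $K:=\sigma^{-1}(\sigma(\conv(F\cap\Sigma)))$ is convex in $V$. By construction $K$ is $\sigma$-saturated, contains $F$ (because $\emptyset\neq F\cap\Sigma\subset\conv(F\cap\Sigma)$ lies in the single fiber $F$, forcing $F\subset K$), and satisfies $K\cap\Sigma=\conv(F\cap\Sigma)$ by the intermediate lemma. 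Applying (b) to $K$ then yields $\pi_\Sigma(F)\subset\pi_\Sigma(K)=K\cap\Sigma=\conv(F\cap\Sigma)$.

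Part (b) follows from (a): for $v\in K$ with $K$ a $\sigma$-saturated convex set, the fiber $F_v$ lies in $K$ by saturation, so $\conv(F_v\cap\Sigma)\subset K$ by convexity, and (a) gives $\pi_\Sigma(v)\in\conv(F_v\cap\Sigma)\subset K$; combined with $\pi_\Sigma(v)\in\Sigma$, this yields $\pi_\Sigma(K)\subset K\cap\Sigma$, while the reverse inclusion is trivial. For (c), Corollary~\ref{MC:isometry} identifies the closed subsets $S\subset X$ having convex preimage under $\sigma$ with those having convex preimage under $\sigma|_\Sigma$; combined with the observation that surjectivity of $\sigma|_\Sigma$ forces every $\sigma$-fiber to meet $\Sigma$, one obtains the bijection $K\mapsto K\cap\Sigma$ with inverse $K'\mapsto\sigma^{-1}(\sigma(K'))$ (the two compositions return the identity by direct verification using saturation), and the identification $K\cap\Sigma=\pi_\Sigma(K)$ is exactly (b).

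The main obstacle is the intermediate lemma. In the polar case it is immediate from the $W$-invariance of $\conv(W\cdot v)$ under the Weyl group action on $\Sigma$, but in the general fat-section setting no such group is given a priori and the argument must be purely metric. I would try to deduce it from Theorem~\ref{MT:quotient} applied to $\sigma|_\Sigma$ itself, by verifying the norm-one gradient condition for the closed subset $\sigma(\conv(F\cap\Sigma))\subset X$, exploiting that $\{0\}$ being a $\sigma|_\Sigma$-fiber forces $F\cap\Sigma$ to lie on a single sphere centered at the origin and therefore constrains the geometry of the distance function.
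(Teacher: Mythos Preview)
Your reduction of (a) to the single containment $\pi_\Sigma(F)\subset\conv(F\cap\Sigma)$ is correct, and your intermediate lemma---that $\conv(F\cap\Sigma)$ is $\sigma|_\Sigma$-saturated---is precisely Proposition~\ref{P:saturation}(b) applied to the submetry $\sigma|_\Sigma$ (since $F\cap\Sigma$ is a single $\sigma|_\Sigma$-fiber and $\{0\}$ is a $\sigma|_\Sigma$-fiber). No detour through Theorem~\ref{MT:quotient} is needed there.

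The genuine gap is circularity. To finish (a), you invoke (b) for the set $K=\sigma^{-1}(\sigma(\conv(F\cap\Sigma)))$ in order to conclude $\pi_\Sigma(K)=K\cap\Sigma$. But your proof of (b) rests on (a): for an arbitrary $v\in K$ you use $\pi_\Sigma(v)\in\conv(F_v\cap\Sigma)$, which is the statement of (a) for the fiber $F_v$. Since $K$ contains many fibers other than $F$, you are assuming what you set out to prove. No reshuffling fixes this: the entire content of the theorem is to establish $\pi_\Sigma(K)\subset K\cap\Sigma$ for \emph{some} nontrivial saturated convex $K$, and your scheme never does so independently.

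The paper breaks the circle by a different mechanism. After a translation placing the origin in $\conv(F)$ (via Lemmas~\ref{L:convexfixed}, \ref{L:fatfixed}, \ref{L:fixedpoints}), it exhibits \emph{two} order-preserving bijections from origin-containing compact convex saturated subsets of $V$ to those of $\Sigma$: the map $A\mapsto A\cap\Sigma$ (from Corollary~\ref{MC:isometry}) and the map $A\mapsto(\Sigma\cap A^\circ)^\circ=\pi_\Sigma(A)$ (from Proposition~\ref{P:bipolar} together with Proposition~\ref{P:saturation}(c), which says polars of saturated sets are saturated). A short extremal argument using that $F$ lies on a sphere then shows these two bijections agree on $\conv(F)$, yielding $\pi_\Sigma(\conv(F))=\conv(F)\cap\Sigma$ without ever touching (b). After that, (b) and (c) follow exactly as you outline.

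A minor further point: your route to (c) through Corollary~\ref{MC:isometry} handles only \emph{closed} saturated convex sets, whereas the statement is for arbitrary ones; the paper deals with this by arguing instead via Proposition~\ref{P:saturation}(b) together with parts (a) and (b).
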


Thus Corollary \ref{C:Kostant} holds in this more general situation. Specifically, Theorem \ref{MT:fatsection} (b) (respectively, (c)) is a direct generalization of Corollary \ref{C:Kostant} (a) (respectively, (b)). Moreover, Theorem \ref{MT:fatsection} (a) gives a direct generalization of one of the inclusions in the statement of Theorem \ref{T:Kostant}, namely ``$\pi_\Sigma(G\cdot v)\subset \conv(W\cdot v)$''. As for the reverse inclusion ``$\pi_\Sigma(G\cdot v)\supset \conv(W\cdot v)$''  (or, equivalently, convexity of $\pi_\Sigma(G\cdot v)$), its direct generalization ``$\pi_\Sigma(F)\supset \conv(F\cap\Sigma)$'' (or, equivalently, convexity of $\pi_\Sigma(F)$) fails in general,  see Section \ref{S:examples} for counter-examples. But we note that it does hold for {\bf isoparametric} foliations, see \cite{Terng86}. 

We finish with a couple of open questions:
\begin{question}
Is there a submetry with fat section, that is not isoparametric, and for which $\pi_\Sigma(F)= \conv(F\cap\Sigma)$ for every $\sigma$-fiber $F\subset V$?
\end{question}

\begin{question} \label{Q:spectrahedron}
In the situation of Corollary \ref{MC:isometry}, does the bijection between $\sigma$-saturated closed convex subsets of $V$ and $\sigma'$-saturated closed convex subsets of $V'$ preserve some special class of convex sets, such as spectrahedra or spectrahedral shadows?
\end{question}
Special cases of Question \ref{Q:spectrahedron} have received much attention recently  (see, for example, \cite{KS20,Kummer21, SS20, BKM24}).

\subsection*{Acknowledgements} It is a pleasure to thank Alexander Lytchak for help with Alexandrov Geometry, and for suggesting an earlier version of Theorem \ref{MT:quotient}. I am also grateful to Marco Radeschi for suggesting a proof (very similar to the one presented here) of Proposition \ref{P:saturation}(b), that convex hulls of saturated sets are saturated. Finally, I thank the anonymous referees for suggestions that have improved the exposition.

\section{Preliminaries}
\label{S:prelim}

\subsection{Alexandrov Geometry}
\label{SS:Alexandrov}
By an Alexandrov space we will mean a length space $X$ with lower curvature bound in the sense of Alexandrov, and finite dimension. See  \cite[Chapter 10]{BBI}, \cite{BGP92}, and \cite{Petrunin07}. For example, the space $X$ appearing in the statement of Theorems \ref{MT:quotient} and \ref{MT:fatsection} is a non-negatively curved Alexandrov space because it is the base of a submetry from the  non-negatively curved Alexandrov space $V$,  see \cite[4.6]{BGP92}.

Given a point $x$ in an Alexandrov space $X$, the tangent cone $T_xX$ can be defined as a limit of re-scalings at $x$. For a real-valued function $f$ defined near $x$, the differential $d_xf$ is a real-valued function on $T_xX$, also defined  as a limit of re-scalings, which is well-defined when $f$ is ``semi-concave'', see  \cite[Section 1.3]{Petrunin07}. (Semi-concave functions are assumed to be locally Lipschitz.)

In this article we will take $f$ to be a distance function to some subset $S$. That such $f$ is semi-concave on $X\setminus S$ follows from  the lower curvature bound.

A semi-concave function $f$ has a well-defined gradient, which is an element $\nabla_x f\in T_xX$ of the  tangent cone. Moreover, the norm $\|\nabla_x f\|$ of the gradient $\nabla_x f\in T_xX$ (that is, the distance to the apex of the cone $T_xX$) is the maximum of the differential $d_xf$ on the space of directions $\Sigma_x X\subset T_xX$, unless $d_xf\leq 0$, in which case $\|\nabla_x f\|=0$, see \cite[Section 1.3]{Petrunin07}. In particular,  $\|\nabla_x f\|$  is equal to the so-called ascending slope $|\nabla^+f|(x)$, defined by
\[ |\nabla^+f|(x)=\max \left\{ 0,\  \lim\sup_{y\to x} \frac{f(y)-f(x)}{d(y,x)} \right\}\]

When $f$ is a distance function, it is $1$-Lipschitz, and thus  $\|\nabla_x f\|\in[0,1]$.

\subsection{Submetries}
\label{SS:submetries}
The definition of \emph{submetry} goes back to \cite{Berestovskii87} (see also \cite{KL20}): A \emph{submetry} is a map $\sigma\colon  Y\to X$ between metric spaces that maps closed balls to closed balls of the same radius. The fibers of $\sigma$ (sometimes also called \emph{leaves}) form a decomposition of $Y$ into pairwise \emph{equidistant} closed subsets, in the sense that $d(x,F')=d(F,F')$ for every two fibers $F,F'$ and all $x\in F$. 

Conversely, given such a partition $\F$ of the metric space $Y$ (into ``leaves''), there is a unique metric on the set of leaves $X$ such that the natural map $Y\to X$ is a submetry. Endowed with this metric, $X$ is called the \emph{leaf space}.

A function on $Y$ is called \emph{$\sigma$-basic} (or just basic, if $\sigma$ is clear from context) if it is constant on the fibers of $\sigma$, that is, if it descends to a function on the ``base'' $X$. A subset of $Y$ is called \emph{$\sigma$-saturated} (or just saturated) if it is the union of $\sigma$-fibers, that is, if it is the inverse image under $\sigma$ of a subset of $X$.

The main source of examples of submetries are isometric group actions. Namely, if the group $G$ acts on the metric space $Y$ by isometries and with closed orbits, then the natural quotient map $Y\to Y/G$ is a submetry. The fibers (``leaves'') are the $G$-orbits, the saturated subsets of $Y$ are the $G$-invariant subsets, and the basic functions on $Y$ are the $G$-invariant functions. 

In the present paper, we consider submetries $V\to X$, where $V$ will always denote a finite-dimensional real vector space with inner product (and associated Euclidean metric). We mention a few structure results that apply to this situation: $X$ is an Alexandrov space with non-negative curvature (see \cite[4.6]{BGP92}); every fiber is a subset of positive reach; and most fibers are $C^{1,1}$-submanifolds (see \cite{KL20} for the last two, and much more). If one adds the assumption that every fiber is a smooth submanifold, one arrives at the notion of ``manifold submetry'', see \cite{MR20, MR20apps} for structure results. 

We will many times add the assumption that the singleton $\{0\}$  is a $\sigma$-fiber, where $0\in V$ denotes the origin. It implies the following version of the Homothetic Transformation Lemma (see also \cite[Lemma 6.2]{Molino}, and \cite[Lemma 24]{MR20}). The importance of such submetries is that they model the ``infinitesimal behavior'' of more general submetries (compare \cite[Sections 5,7]{KL20}).
\begin{lemma}
\label{L:HTL}
Let $\sigma\colon V\to X$ be a submetry such that $\{0\}$ is a fiber. If $v,w\in V$ are such that $\sigma(v)=\sigma(w)$, then $\sigma(\lambda v)=\sigma(\lambda w)$ for all $\lambda\geq 0$.
\end{lemma}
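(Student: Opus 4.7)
The central observation is that the hypothesis ``$\{0\}$ is a fiber'' forces the submetry to preserve distances from the origin: namely, $\|v\|=d(\sigma(v),\sigma(0))$ for every $v\in V$. One inequality comes from the general fact that submetries are $1$-Lipschitz (if $\sigma(\bar B_r(y))=\bar B_r(\sigma(y))$, then $d(\sigma(y),\sigma(y'))\le d(y,y')$). The reverse inequality uses the equidistance property of submetry fibers: $d(v,0)=d(\sigma^{-1}(\sigma(v)),\sigma^{-1}(\sigma(0)))=d(\sigma(v),\sigma(0))$. In particular, $\sigma(v)=\sigma(w)$ immediately implies $\|v\|=\|w\|$.

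Armed with this, my plan is to show, for $\lambda\ge 0$ and any $v\in V$, that $d(\sigma(\lambda v),\sigma(v))=|1-\lambda|\,\|v\|$. The bound $\le$ is again the $1$-Lipschitz property applied to the segment from $v$ to $\lambda v$; the bound $\ge$ is the reverse triangle inequality in $X$ using $d(\sigma(\lambda v),\sigma(0))=\lambda\|v\|$ and $d(\sigma(v),\sigma(0))=\|v\|$.

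Now fix $v,w$ with $\sigma(v)=\sigma(w)$ and $\lambda\ge 0$. By the submetry property applied to the closed ball of radius $|1-\lambda|\,\|v\|$ centered at $w$,
\[
\sigma\bigl(\bar B_{|1-\lambda|\,\|v\|}(w)\bigr)=\bar B_{|1-\lambda|\,\|v\|}(\sigma(w))=\bar B_{|1-\lambda|\,\|v\|}(\sigma(v)),
\]
and the right-hand side contains $\sigma(\lambda v)$ by the previous paragraph. So there exists $u\in V$ with $\sigma(u)=\sigma(\lambda v)$ and $\|u-w\|\le |1-\lambda|\,\|v\|$. Applying the key observation once more, $\|u\|=d(\sigma(u),\sigma(0))=d(\sigma(\lambda v),\sigma(0))=\lambda\|v\|=\lambda\|w\|$.

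It then remains to pin down $u$ as $\lambda w$ using only the norm information. For $\lambda\le 1$, the triangle inequality gives $\|w\|\le\|u\|+\|u-w\|\le\lambda\|w\|+(1-\lambda)\|w\|=\|w\|$, forcing equality throughout; for $\lambda\ge 1$, the analogous chain $\|u\|\le\|w\|+\|u-w\|\le\lambda\|w\|=\|u\|$ also gives equality. In both cases the equality case of the triangle inequality in Euclidean space places $0,w,u$ (in the appropriate order) on a common ray through the origin, and the known norms then yield $u=\lambda w$. Therefore $\sigma(\lambda w)=\sigma(u)=\sigma(\lambda v)$, as required. The main subtlety is really just the initial identity $\|v\|=d(\sigma(v),\sigma(0))$, which must be extracted cleanly from the definition of submetry; once this is in hand, the rest is triangle-inequality bookkeeping.
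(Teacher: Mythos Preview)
Your proof is correct and takes a genuinely different route from the paper's. The paper argues that the rays $t\mapsto tv$ and $t\mapsto tw$ project under $\sigma$ to geodesic rays in $X$ that share the points $\sigma(0)$ and $\sigma(v)=\sigma(w)$, and then invokes the non-branching of geodesics in Alexandrov spaces (citing \cite{BBI}) to conclude these rays coincide. Your argument, by contrast, never leaves the level of distances: from the ball-preserving definition of submetry and the hypothesis $\sigma^{-1}(\sigma(0))=\{0\}$ you extract $\|v\|=d(\sigma(v),\sigma(0))$, then use the submetry property to produce a preimage $u$ of $\sigma(\lambda v)$ near $w$, and finally identify $u=\lambda w$ via the equality case of the Euclidean triangle inequality. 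This is more elementary and fully self-contained---it does not require knowing that $X$ is an Alexandrov space, nor any facts about geodesics in such spaces. The paper's approach, on the other hand, makes the geometric picture (radial rays descend to rays in the quotient) more transparent and would adapt verbatim to submetries from more general Alexandrov spaces, whereas your final step relies on the rigidity of the equality case in the triangle inequality, which is specific to the Euclidean (or strictly convex normed) domain.
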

\begin{proof}
The ray $t\mapsto tv$ (respectively $t\mapsto tw$), for $t\geq 0$, minimizes distance between $\{0\}$ and each fiber of the form $\sigma^{-1}(\sigma(t_0v))$ (respectively $\sigma^{-1}(\sigma(t_0w))$), for $t_0\geq 0$. Thus they descend to geodesic rays $\gamma_v, \gamma_w$ in $X$. Recall that, $X$ being an Alexandrov space, geodesics do not branch, and,  since $\gamma_v(0)=\gamma_w(0)$ and $\gamma_v(1)=\gamma_w(1)$, we obtain $\gamma_v=\gamma_w$ (see \cite[Exercises 10.1.2, 10.1.5]{BBI}). In particular, $\gamma_v(\lambda)=\gamma_w(\lambda)$, or, in other words, $\sigma(\lambda v)=\sigma (\lambda w)$.
\end{proof}

Given a submetry $\sigma\colon V\to X$, if the singleton $\{v\}$ is a fiber, we will say that $v$ is a \emph{fixed point} (of $\sigma$).

We will need the following Lemma, which follows from \cite[Proposition 5.6]{KL20}, and is also a slight generalization of a well-known fact about singular Riemannian foliations (see \cite[Proposition 5]{MR20quad}). For completeness we provide an elementary proof.
\begin{lemma}
\label{L:fixedpoints}
Let $\sigma\colon V\to X$ be a submetry such that the origin $0$ is a fixed point. Then the set $V_0$ of all fixed points is a vector subspace of $V$, and $\sigma$ ``splits'' in the sense that $X$ is isometric to $V_0\times X'$ for some metric space $X'$, and 
\[\sigma=\id_{V_0}\times\sigma'\colon  V=V_0\times V_0^\perp\to V_0\times X'=X.\]
Here $\sigma'\colon V_0^\perp\to X'$ is a submetry whose unique fixed point is the origin.
\end{lemma}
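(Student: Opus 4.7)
The plan is to first show that $V_0$ is a vector subspace, then establish the splitting by proving that translation by any $v_0\in V_0$ permutes the fibers of $\sigma$.

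For the linearity of $V_0$ I would use two ingredients. First, Lemma~\ref{L:HTL} shows $\lambda v\in V_0$ whenever $v\in V_0$ and $\lambda\geq 0$: if $\sigma(w)=\sigma(\lambda v)$ with $\lambda>0$, applying the lemma with scalar $1/\lambda$ gives $\sigma(w/\lambda)=\sigma(v)$, and since $\{v\}$ is a fiber this forces $w=\lambda v$. Second, because the fibers of $\sigma$ are pairwise equidistant, for each $v\in V_0$ the function $w\mapsto\|w-v\|$ is $\sigma$-basic; applied to $v=0$ this shows $\|w\|$ is basic, and expanding $\|w-v\|^2$ then shows $\langle w,v\rangle$ is basic for every $v\in V_0$. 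For $v_1,v_2\in V_0$, if $\sigma(w)=\sigma(v_1+v_2)$ then $\|w\|=\|v_1+v_2\|$ and $\langle w,v_1+v_2\rangle=\|v_1+v_2\|^2=\|w\|\,\|v_1+v_2\|$, which is equality in Cauchy--Schwarz and forces $w=v_1+v_2$; hence $v_1+v_2\in V_0$. An analogous argument applied to the fiber of $-v$ (using $\langle w,v\rangle=-\|v\|^2=-\|w\|\,\|v\|$) gives $-v\in V_0$, so $V_0$ is linear.

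The previous computation also yields the key structural fact: for any fiber $F$ and $w_1,w_2\in F$, the difference $w_1-w_2$ is orthogonal to $V_0$. Thus each fiber lies in a translate of $V_0^\perp$, and the orthogonal projection $p\colon V\to V_0$ is constant on fibers. To establish the splitting, I set $\sigma'=\sigma|_{V_0^\perp}\colon V_0^\perp\to X'$, where $X'$ is the leaf space of $\sigma|_{V_0^\perp}$; a short check (using that $p$ is basic to lift balls from $V$ to $V_0^\perp$) shows $\sigma'$ is itself a submetry. The crux is then to prove translation invariance: for $q,q'\in V_0^\perp$ with $\sigma(q)=\sigma(q')$ and $v_0\in V_0$, one must show $\sigma(q+v_0)=\sigma(q'+v_0)$. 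To this end I would apply Lemma~\ref{L:HTL} twice, at two different basepoints. Since $\{v_0\}$ is a $\sigma$-fiber, the recentered submetry $\tilde\sigma(u):=\sigma(u+v_0)$ has $\{0\}$ as a fiber, so Lemma~\ref{L:HTL} applies to $\tilde\sigma$; combined with $\tilde\sigma(q-v_0)=\tilde\sigma(q'-v_0)$ it yields
\[\sigma(\lambda(q-v_0)+v_0)=\sigma(\lambda(q'-v_0)+v_0)\quad\text{for all }\lambda\geq 0.\]
Applying Lemma~\ref{L:HTL} to $\sigma$ itself then gives
\[\sigma(\mu\lambda q+\mu(1-\lambda)v_0)=\sigma(\mu\lambda q'+\mu(1-\lambda)v_0)\quad\text{for all }\mu,\lambda\geq 0,\]
and the choice $\mu=2,\lambda=1/2$ delivers the desired equality.

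From translation invariance and the coset containment $F\subset v_F+V_0^\perp$, the map $w\mapsto(p(w),\sigma'(w-p(w)))$ descends to a bijection $X\to V_0\times X'$, and Pythagoras shows the product metric on $V_0\times X'$ agrees with the quotient metric on $X$. I expect the main obstacle to be the two-step application of Lemma~\ref{L:HTL} proving translation invariance: every other ingredient (linearity of $V_0$, the containment of fibers in cosets of $V_0^\perp$, and the final metric identification $X\cong V_0\times X'$) follows routinely once that step is in hand.
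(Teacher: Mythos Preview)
Your proof is correct and takes a somewhat different route from the paper's. The paper argues by induction on $\dim V$, peeling off one fixed-point direction at a time: for a single nonzero $v\in V_0$ it shows $\langle\cdot,v\rangle$ is basic via the Busemann-function formula of Lemma~\ref{L:Busemann}, and then cites an external reference (\cite{MR20quad}) for the fact that the translates $cv+F$ are again fibers, which yields a one-dimensional splitting $V\cong\R\times\lambda_v^{-1}(0)$; induction finishes. You instead handle all of $V_0$ at once. Your expansion of $\|w-v\|^2$ is a more direct way to see that $\langle\cdot,v\rangle$ is basic (bypassing Lemma~\ref{L:Busemann} entirely), the Cauchy--Schwarz equality argument gives linearity of $V_0$ cleanly, and your two-step application of Lemma~\ref{L:HTL}---first for the recentered submetry $\tilde\sigma(\cdot)=\sigma(\cdot+v_0)$, then for $\sigma$ itself, with the choice $\mu=2,\ \lambda=1/2$---is a self-contained proof of translation invariance that avoids the external citation. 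Your approach is a bit more elementary and global; the paper's has the virtue of reducing everything to the one-dimensional split, which is closer to the standard splitting-theorem template.
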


We first give a separate statement of a basic fact from Euclidean Geometry that will be useful in the proof of Lemma \ref{L:fixedpoints} and elsewhere:
\begin{lemma}
\label{L:Busemann}
Let $V$ be a vector space with inner product. Then, for $u,v\in V$ with $\|u\|=1$, one has:
\[\langle v, u\rangle=\sup_{t>0} \left(t-d\left(v,t u\right)\right)=\lim_{t\to\infty} \left(t-d\left(v,t u\right)\right)\]
\end{lemma}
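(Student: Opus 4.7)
The plan is to reduce everything to an explicit computation of $d(v,tu)^2 = \|v-tu\|^2 = t^2 - 2t\langle v,u\rangle + \|v\|^2$, which uses only $\|u\|=1$, and then handle the three equalities separately.

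First I would establish the upper bound $t - d(v,tu) \leq \langle v,u\rangle$ for every $t>0$. Expanding gives
\[d(v,tu)^2 - (t-\langle v,u\rangle)^2 = \|v\|^2 - \langle v,u\rangle^2 \geq 0\]
by Cauchy--Schwarz (applied to $v$ and the unit vector $u$), so $d(v,tu) \geq |t-\langle v,u\rangle| \geq t-\langle v,u\rangle$, yielding $\sup_{t>0}(t-d(v,tu)) \leq \langle v,u\rangle$.

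Next I would show that $f(t) := t - d(v,tu)$ is nondecreasing in $t$, so that the supremum agrees with the limit. The cleanest route is the reverse triangle inequality: for $t' > t > 0$,
\[d(v,t'u) \leq d(v,tu) + d(tu,t'u) = d(v,tu) + (t'-t),\]
which rearranges to $f(t') \geq f(t)$. (Alternatively, one could differentiate $f$ directly and again invoke Cauchy--Schwarz to see $f'\geq 0$.)

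Finally I would compute the limit as $t\to\infty$. Factoring,
\[d(v,tu) = t\sqrt{1 - 2\langle v,u\rangle/t + \|v\|^2/t^2},\]
and the standard expansion $\sqrt{1+x} = 1 + x/2 + O(x^2)$ gives $d(v,tu) = t - \langle v,u\rangle + O(1/t)$, so $\lim_{t\to\infty} f(t) = \langle v,u\rangle$. Combined with monotonicity and the upper bound, this forces $\sup_{t>0}f(t) = \lim_{t\to\infty}f(t) = \langle v,u\rangle$, completing the proof. There is no real obstacle here; the statement is essentially the Busemann function identity for the ray $t\mapsto tu$ in Euclidean space, and each step is a short manipulation.
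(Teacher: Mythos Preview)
Your proof is correct. The paper states this lemma as a ``basic fact from Euclidean Geometry'' and does not supply a proof, so there is nothing to compare against; your argument would serve perfectly well as the omitted verification.
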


\begin{proof}[Proof of Lemma \ref{L:fixedpoints}]
We use induction on $\dim(V)$. If $\dim(V)=0$, there is nothing to prove. Assume $\dim(V)>0$.

If the origin is the only fixed point, there is nothing to prove.

If $v\neq 0$ is a fixed point, consider the linear function $\lambda_v\colon V\to\R$ given by $\lambda_v(x)=\langle x, v\rangle$. We claim that $\lambda_v$ is basic. 

Indeed, assume $x,x'\in V\setminus \{0\}$ belong to the same fiber. Then $\|x\|=\|x'\|$ by equidistance of fibers (because the origin is a fiber), and so $\sigma(t x/\|x\|)=\sigma(t x'/\|x'\|)$ for all $t>0$ by the Homothetic Transformation Lemma (Lemma \ref{L:HTL}). Again by equidistance of fibers, we obtain $d\left(v,t x /\|x\|\right)=d\left(v,t x' /\|x'\|\right)$ for all $t>0$ (because $\{v\}$ is a fiber, by assumption). But Lemma \ref{L:Busemann} yields
\[\lambda_v(x)=\|x\| \sup_{t>0} \left(t-d\left(v,t x /\|x\|\right)\right)\]
and similarly for $x'$. Therefore $\lambda_v(x)=\lambda_v(x')$.

Since $\lambda_v$ is basic, the level sets are saturated. Given a fiber $F\subset \lambda_v^{-1}(0)$, it follows from equidistance of fibers that the translate $cv+ F$ is again a fiber, for every $c\in\R$ (see \cite[Prop. 5, proof of (b)$\implies$(a)]{MR20quad} for more details). Therefore $\sigma$ splits as $\id_\R\times\sigma'\colon \R\times \lambda_v^{-1}(0)\to X=\R\times X'$ for some submetry $\sigma'\colon  \lambda_v^{-1}(0)\to X'$. Applying the inductive hypothesis to $\sigma'$ finishes the proof.
\end{proof}

Regarding convex sets, we will use the elementary observation:
\begin{lemma}
\label{L:convexfixed}
Let $\sigma\colon V\to X$ be a submetry such that the origin $0$ is a fixed point. Then every closed convex $\sigma$-saturated set $K$ has a fixed point.
\end{lemma}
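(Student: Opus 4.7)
The plan is to reduce to the case where $0$ is the unique fixed point and then run a minimum-norm argument, exploiting the fact that fibers of $\sigma$ lie on concentric spheres around the origin.

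First, I would apply Lemma \ref{L:fixedpoints} to write $\sigma=\id_{V_0}\times\sigma'$ under the orthogonal splitting $V=V_0\oplus V_0^\perp$, where $V_0$ is the subspace of all fixed points of $\sigma$ and $\sigma'\colon V_0^\perp\to X'$ has $0$ as its unique fixed point. Fixed points of $\sigma$ then correspond exactly to points of $V_0\times\{0\}$. Assuming $K$ is nonempty, I would pick any $v\in K$, set $v_0=\pi_{V_0}(v)$, and pass to the slice
\[K' := \{w\in V_0^\perp : (v_0,w)\in K\}\subset V_0^\perp.\]
Using the product form of $\sigma$ together with the fact that $K$ is $\sigma$-saturated, it is routine to check that $K'$ is a nonempty, closed, convex, $\sigma'$-saturated subset of $V_0^\perp$. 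It then suffices to show $0\in K'$, since $(v_0,0)\in K$ is then a fixed point of $\sigma$.

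For this, I would let $v^*\in K'$ be the unique point of minimum norm; it exists because $K'$ is a nonempty closed convex subset of the finite-dimensional Euclidean space $V_0^\perp$. Assuming for contradiction that $v^*\neq 0$, the $\sigma'$-fiber $F$ through $v^*$ satisfies $F\subset K'$ by saturation, hence $\conv(F)\subset K'$ by convexity. Since $\{0\}$ is a $\sigma'$-fiber, equidistance of fibers forces $F$ to lie on the sphere of radius $\|v^*\|$ around the origin. But if $F$ contained any $w\neq v^*$, then $(v^*+w)/2\in\conv(F)\subset K'$ would have norm strictly less than $\|v^*\|$ by strict convexity of $\|\cdot\|^2$ on pairs of distinct vectors of equal norm, contradicting the minimality of $\|v^*\|$ on $K'$. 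Hence $F=\{v^*\}$, meaning $v^*$ is a fixed point of $\sigma'$, which forces $v^*=0$.

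I do not foresee a serious obstacle: the argument essentially packages Lemma \ref{L:fixedpoints} together with the strict convexity of the squared Euclidean norm. The step that takes the most care is verifying that the slice $K'$ really is $\sigma'$-saturated, which depends precisely on the product structure $\sigma=\id_{V_0}\times\sigma'$ furnished by the splitting.
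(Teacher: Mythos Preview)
Your proof is correct, and the core idea is the same as the paper's: take the point of minimum norm in the (relevant) convex set and show that its fiber is a singleton. However, your route is longer than necessary. The paper's entire argument is two sentences: take $v\in K$ closest to the origin, and observe that $\{v\}=K\cap \overline{B}(0,\|v\|)$ is the intersection of two saturated sets (the closed ball is saturated because every fiber is equidistant from the fiber $\{0\}$), hence is itself saturated, i.e.\ $v$ is a fixed point.

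Your detour through Lemma~\ref{L:fixedpoints} and the slice $K'$ is not needed: you invoke the splitting only so that the fixed point you eventually produce (the min-norm point of $K'$) is forced to equal $0$, but the lemma merely asks for \emph{some} fixed point of $\sigma$ in $K$, so the min-norm argument applied directly to $K$ already finishes. Your midpoint/strict-convexity argument that the fiber through $v^*$ must be a singleton is perfectly valid, but the paper's ``intersection of two saturated sets'' observation gets there without any computation.
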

\begin{proof}
Take the point $v\in K$ closest to the origin (existence and uniqueness follow from the assumption that $K$ is closed and convex). Then $\{v\}$ is saturated, hence a fiber, because it is the intersection of two saturated sets: $K$ and the closed ball of radius $\|v\|$ around the origin.
\end{proof}

\subsection{Convex sets and support functions}
We will use the concepts of ``support function'' and ``polar'' (see e.g. \cite[pages 280--281]{SSS11}), as well as a couple of basic facts.
\begin{definition}
\label{D:support}
Let $A\subset V$ be a subset of a finite-dimensional real vector space with inner product. The \emph{support function} $h(A,\cdot)\colon V\to\R\cup\infty$ is defined by 
\[ h(A,v)=\sup_{a\in A} \langle v,a \rangle ,\]
and the \emph{polar} of $A$ is the subset $A^\circ\subset V$ defined by
\[A^\circ=\{ v\in V \mid h(A,v)\leq 1\}=\{ v\in V \mid \langle v ,a\rangle \leq 1 \quad \forall a\in A\}.\]
\end{definition}

\begin{proposition}
\label{P:bipolar}
Let $A\subset V$ be a closed subset of a  finite-dimensional real vector space with inner product.
\begin{enumerate}[(a)]
\item (Bipolar Theorem) If $\conv(A)$ contains the origin, then $A^{\circ\circ}=\conv(A)$.
\item If $\Sigma\subset V$ is a subspace, and $A$ is compact, convex, and contains the origin, then $\pi_\Sigma(A)=(\Sigma\cap A^\circ)^\circ$, where $\pi_\Sigma\colon V\to V$ denotes orthogonal projection onto $\Sigma$.
\end{enumerate}
\end{proposition}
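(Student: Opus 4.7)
The plan is to prove (a) by the standard Hahn--Banach separation argument, and then to derive (b) as a direct consequence of (a) together with a short computation of a polar.

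For (a), the inclusion $\conv(A)\subset A^{\circ\circ}$ is automatic: for $a\in A$ and $v\in A^\circ$ one has $\langle v,a\rangle\leq 1$, so $A\subset A^{\circ\circ}$, and $A^{\circ\circ}$ is closed and convex as an intersection of closed half-spaces. For the reverse inclusion, given $x\notin\conv(A)$, Hahn--Banach yields $v\in V$ and $c\in\R$ with $\langle v,a\rangle\leq c<\langle v,x\rangle$ for every $a\in\conv(A)$. Since $0\in\conv(A)$, we have $c\geq 0$. If $c>0$, then $v/c\in A^\circ$ witnesses $x\notin A^{\circ\circ}$. If $c=0$, then $tv\in A^\circ$ for every $t>0$, and $\langle tv,x\rangle>1$ once $t$ is large enough, so again $x\notin A^{\circ\circ}$.

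For (b), I read the outer polar as computed inside $\Sigma$, so that both sides are subsets of $\Sigma$; with the other reading, the right-hand side would contain all of $\Sigma^\perp$ and so could not equal the compact set $\pi_\Sigma(A)$. The key observation is that for $v\in\Sigma$ and $a\in V$,
\[ \langle v,\pi_\Sigma(a)\rangle=\langle v,a\rangle, \]
since $a-\pi_\Sigma(a)\in\Sigma^\perp$. Hence the polar of $\pi_\Sigma(A)$ taken inside $\Sigma$ is
\[ (\pi_\Sigma(A))^\circ=\{v\in\Sigma\mid \langle v,a\rangle\leq 1\ \forall a\in A\}=\Sigma\cap A^\circ. \]
Now $\pi_\Sigma(A)\subset\Sigma$ is compact (as the continuous image of the compact set $A$), convex, and contains the origin, so part (a) applied inside $\Sigma$ yields $\pi_\Sigma(A)=((\pi_\Sigma(A))^\circ)^\circ=(\Sigma\cap A^\circ)^\circ$.

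The only mildly subtle step is the $c=0$ case in the Hahn--Banach argument of (a), which is handled by exploiting the cone-like invariance of $A^\circ$ under nonnegative scaling of elements paired against $0\in\conv(A)$. Everything else reduces to unwinding definitions once one commits to interpreting the outer polar in (b) as taken inside $\Sigma$.
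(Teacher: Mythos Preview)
Your argument is correct and, for (b), matches the paper's exactly: identify $(\pi_\Sigma(A))^\circ=\Sigma\cap A^\circ$ (polars taken in $\Sigma$) via the identity $\langle v,\pi_\Sigma a\rangle=\langle v,a\rangle$ for $v\in\Sigma$, then apply (a) inside $\Sigma$; your explicit remark about where the outer polar lives is a helpful clarification the paper leaves implicit. For (a) the paper simply cites Barvinok, while you supply the standard separation argument; the only quibble is that strict Hahn--Banach separation requires $x\notin\overline{\conv(A)}$, so what your argument literally proves is $A^{\circ\circ}=\overline{\conv(A)}$---this is the usual Bipolar Theorem and agrees with the stated version in every use made of it in the paper (where the convex hull in question is always closed).
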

\begin{proof}
\begin{enumerate}[(a)]
\item See \cite[IV (1.2) page 144]{Barvinok}.
\item It follows directly from Definition \ref{D:support} that the support function of $\pi_\Sigma(A)$ is the restriction to $\Sigma$ of the support function of $A$, and, thus, that $\Sigma\cap A^\circ =(\pi_\Sigma(A))^\circ$. 

Since $\pi_\Sigma(A)$ is a closed convex set containing the origin, the Bipolar Theorem implies that  $\pi_\Sigma(A)=(\pi_\Sigma(A))^{\circ\circ}=(\Sigma\cap A^\circ)^\circ$.
\end{enumerate}
\end{proof}

\section{Detecting convexity in the base of a submetry}
\label{S:detect}

In this section we provide a proof of  Theorem \ref{MT:quotient}, about how convexity can be detected metrically in the quotient, and we also investigate convex hulls of saturated sets. Some Alexandrov Geometry will be used, see Subsection \ref{SS:Alexandrov}, \cite[Chapter 10]{BBI}, \cite{BGP92}, and \cite{Petrunin07}.


%
%
%

\begin{proof}[Proof of Theorem \ref{MT:quotient}]
Let $T=\sigma^{-1}(S)$ and denote by $\delta_T:\overline{V\setminus T}\to [0,\infty)$ the distance function to $T$. Since $\sigma$ is a submetry, it follows that $\delta_T=\delta_S\circ \sigma$. By the top of page 2659 in \cite{KL20}, $\delta_T$ is a submetry if and only if $\delta_S$ is a submetry. Finally,  $\delta_T:\overline{V\setminus T}\to [0,\infty)$ is a submetry if and only if $T$ is convex (see \cite[Example 6.6]{KL20} for the special case where $T$ has empty interior --- the general case has a similar proof).\end{proof}

Whereas Theorem \ref{MT:quotient} states that convexity (which is equivalent to infinite reach, see \cite[Theorems 6.2.12, 6.2.13]{KrantzParks}) of a saturated set can be detected metrically in the quotient, the following analogous result applies to positive reach:
\begin{theorem}
\label{T:posreach}
Let $V$ be a finite-dimensional real vector space with an inner product, $X$ a metric space, and $\sigma\colon V\to X$ a submetry. Let $S\subset X$ be a closed subset, and denote by $\delta_S$
the distance function to $S$.

Then $\sigma^{-1}(S)$ is a set of positive reach in $V$ if and only if $\|\nabla_x \delta_S\|=1$ for every $x\in O\setminus S$, for some open subset $O\subset X$ containing $S$.
\end{theorem}
\begin{proof}
Let $T=\sigma^{-1}(S)$. First note that, for a semi-concave function on an Alexandrov space, the ``ascending slope'' is nothing but the norm of its gradient. Thus, setting $P=\sigma$ and $g=\delta_S$ in the formula in \cite[page 2662]{KL20}, it follows that $\|\nabla \delta_T\|=1$ on $\tilde{O}\setminus T$ for some open $\tilde{O}$ containing $T$ if and only if $\|\nabla \delta_S\|=1$ on $O\setminus S$ for some open $O$ containing $S$. The former condition is equivalent to $T$ having positive reach, by \cite[Proposition 1.3 and Proposition 4.1]{KL21}.
\end{proof}

\begin{remark} The condition $\|\nabla_x \delta_T\|=1$ for $x$ in an open set, which appears in the proof above, is also equivalent to differentiability of the distance function $\delta_T$, compare with \cite[Prop. 4.8]{Sakai} and \cite[Proposition 4.1]{KL21}.
\end{remark}

%
%

\subsection{Convex hulls and polars}

Using support functions (Definition \ref{D:support}), we show that taking the convex hull (or polar) preserves saturation:
\begin{proposition}
\label{P:saturation}
Given a submetry $\sigma\colon  V\to X$, such that $\{0\}$ is a fiber, and $A$ any closed saturated subset of $V$, one has:
\begin{enumerate}[(a)]
\item The support function $h(A,\cdot)$ is $\sigma$-basic.
\item The convex hull $\conv(A)$ is $\sigma$-saturated.
\item The polar $A^\circ$ is $\sigma$-saturated.
\end{enumerate}
\end{proposition}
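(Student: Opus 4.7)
My plan is to prove the three parts in the order (a), (c), (b), with the bulk of the work in (a) and the compact case of (b).

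\emph{For (a),} since $A$ is a union of fibers, it suffices to show that the support function $h(F,\cdot)$ of a single fiber $F\subset A$ is $\sigma$-basic; the general case then follows by taking suprema over $F$. Fix such an $F$, and write $r$ for the common norm of points in $F$, which is well-defined because $\{0\}$ is a fiber and fibers are equidistant. By Lemma \ref{L:Busemann}, for $a\in F$ with $a\neq 0$,
\[ \langle v,a\rangle = r\sup_{t>0}\bigl(t-d(v,ta/r)\bigr). \]
Taking the supremum over $a\in F$ and switching the two suprema reduces the problem to computing $\inf_{a\in F}d(v,ta/r)$. By Lemma \ref{L:HTL}, applied in both directions, the set $\{ta/r : a\in F\}$ equals a single fiber $F_t$ depending only on $F$ and $t$; hence the infimum equals $d(v,F_t)$. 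By equidistance of fibers, $d(v,F_t)=d(F_v,F_t)$ depends only on the fiber $F_v=\sigma^{-1}(\sigma(v))$, so
\[ h(F,v)=r\sup_{t>0}\bigl(t-d(F_v,F_t)\bigr) \]
is $\sigma$-basic, and therefore so is $h(A,v)=\sup_{F\subset A}h(F,v)$.

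\emph{For (c),} the polar $A^\circ=\{v : h(A,v)\leq 1\}$ is a sublevel set of the $\sigma$-basic function $h(A,\cdot)$, hence $\sigma$-saturated.

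\emph{For (b),} I will first treat the compact case and then reduce to it. If $A$ is compact then $\conv(A)$ is compact, hence closed, and the standard formula for the distance to a closed convex set gives
\[ d(v,\conv(A))=\max\!\left(0,\ \sup_{u\neq 0}\frac{\langle u,v\rangle - h(A,u)}{\|u\|}\right). \]
I group the outer supremum by fibers $F_\xi$ of $\sigma$ (with $\xi\neq \sigma(0)$): on each $F_\xi$, $\|u\|$ is constant by equidistance and $h(A,u)$ is constant by (a), while $\sup_{u\in F_\xi}\langle u,v\rangle=h(F_\xi,v)$ depends only on $\sigma(v)$ by (a) applied to the saturated set $F_\xi$. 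Thus $d(\cdot,\conv(A))$ is $\sigma$-basic, so $\conv(A)=d^{-1}(0)$ is $\sigma$-saturated. For general closed $A$, the closed balls $\overline{B_R(0)}$ are $\sigma$-saturated (the norm is basic by equidistance from the fiber $\{0\}$), so the sets $A_R:=A\cap\overline{B_R(0)}$ are compact and saturated. Carath\'eodory gives $\conv(A)=\bigcup_{R>0}\conv(A_R)$, since each convex combination involves only boundedly many points of $A$; by the compact case each $\conv(A_R)$ is saturated, and hence so is $\conv(A)$. The main obstacle I foresee is the identification in part (a): one cannot hope to have $\langle v,a\rangle=\langle v',a\rangle$ pointwise when $\sigma(v)=\sigma(v')$, and the key move is to test $v$ against an entire fiber $F\subset A$ at once, so that equidistance and Lemma \ref{L:HTL} can be brought to bear on the inner distance $d(v,ta/r)$.
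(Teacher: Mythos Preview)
Your proof is correct. Parts (a) and (c) follow the paper's argument almost verbatim: reduce to a single fiber, use Lemma~\ref{L:Busemann} together with the Homothetic Transformation Lemma to rewrite $h(F,v)$ in terms of distances between fibers, then take suprema; and observe that $A^\circ$ is a sublevel set of the basic function $h(A,\cdot)$.

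For part (b) your route differs slightly from the paper's. The paper argues directly that $\conv(A)$, written as the intersection of all closed half-spaces containing $A$, is an intersection of saturated sets (replacing each linear functional $\langle v,\cdot\rangle$ by the basic function $h(F_v,\cdot)$). You instead show, for compact $A$, that the distance function $d(\cdot,\conv(A))$ is basic via the support-function formula, and then reduce the general case to the compact one by exhausting $A$ with the saturated compact sets $A_R=A\cap\overline{B_R(0)}$ and invoking Carath\'eodory. The underlying idea---replacing $\langle u,v\rangle$ by $h(F_u,v)$---is the same. Your packaging has the small advantage that it cleanly proves saturation of $\conv(A)$ itself rather than of its closure: the paper's identification of $\conv(A)$ with the intersection of supporting half-spaces tacitly assumes $\conv(A)$ is closed, which need not hold for arbitrary closed $A$, whereas your Carath\'eodory reduction avoids this issue.
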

\begin{proof}
\begin{enumerate}[(a)]
\item Assume first that $A$ is a single fiber $F$. Then $F$ is contained in a sphere of radius $r$ centered at the origin, and, using Lemma \ref{L:Busemann}, one obtains, for every $v\in V$:
\[h(F,v)=\sup_{a\in F} r\left(\sup_{t>0} \left( t - d\left(v,{t \over r}a\right) \right)\right)=r\sup_{t>0} \left( t - d\left(v,{t \over r}F\right) \right). \]
By the Homothetic Transformation Lemma (Lemma \ref{L:HTL}), the set ${t \over r}F$ is a fiber for every $t>0$, thus $d\left(\cdot,{t \over r}F\right)$ is a basic function for all $t>0$. Therefore $h(F,\cdot)$ is a basic function.

For $A$ not necessarily a single fiber, one has $h(A,v)=\sup_{F\subset A}h(F,v)$ for all $v\in V$, where the supremum is taken over all fibers $F$ contained in $A$. Thus $h(A,\cdot)$ is basic.
\item Since $A$ is closed, $\conv(A)$ is the intersection of all half-spaces that contain $A$. In terms of support functions, this reads
\[\conv(A)=\{x\in V\mid \langle v, x\rangle \leq \lambda \quad \forall v,\lambda \text{ such that } h(A,v)\leq \lambda \}.\]
By part (a), the support function $h(A, \cdot)$ is basic, and therefore this can be rewritten as 
\[\conv(A)=\left\{x\in V\mid \sup_{w\in F_v}\langle w, x\rangle \leq \lambda \quad \forall v,\lambda \text{ such that } h(A,v)\leq \lambda \right\},\]
where $F_v$ denotes the $\sigma$-fiber containing $v$. But $\sup_{w\in F_v}\langle w, x\rangle$ is exactly $h(L_v, x)$, and, again by part (a), $h(L_v, \cdot)$ is basic. Thus $\conv(A)$ is the intersection of saturated sets, hence saturated.
\item This follows immediately from part (a) and the definition of the polar as a sub-level set of the support function.
\end{enumerate}

\end{proof}

The next result says that convex hulls can be detected metrically in the quotient:
\begin{corollary}
\label{C:convexhull}
Consider a submetry $\sigma\colon  V\to X$, such that $\{0\}$ is a fiber. Denote by $\mathcal{C}$ the set of all closed subsets of $X$. Then, there exists a map $\conv_0: \mathcal{C}\to\mathcal{C}$, depending only on the metric structure of $X$, such that, for every $S\in\mathcal{C}$,  $\conv_0(S)$ is  the unique closed subset $C$ such that 
\[\overline{\conv(\sigma^{-1}(S))}=\sigma^{-1}(C),\]
 where $\conv(\cdot)$ denotes the convex hull of a subset of $V$.
\end{corollary}
\begin{proof}
For $S\in\mathcal{C}$, define $\conv_0(S)$ to be the intersection $C$ of all closed subsets of $X$ which contain $S$ and which satisfy the condition in Theorem \ref{MT:quotient}. By definition, $\conv_0$ depends only on the distance function of $X$. By Theorem \ref{MT:quotient}, $\sigma^{-1}(C)$ is the intersection of all closed, convex, saturated subsets of $V$ that contain $\sigma^{-1}(S)$. By Proposition \ref{P:saturation}(b), $\conv(\sigma^{-1}(S))$ is saturated, hence its closure is also saturated, and therefore $\overline{\conv(\sigma^{-1}(S))}=\sigma^{-1}(C)$.
\end{proof}

Generalizing ``orbitopes'', see e.g. \cite{SSS11}, we define a ``fibertope''  to be a convex set of the form $\conv(F)$, where $F$ is a single $\sigma$-fiber.
When $\{0\}$ is a fiber of $\sigma$, Proposition \ref{P:saturation}(b) implies that fibertopes are saturated. The next result says that the bijection between saturated convex sets in Corollary \ref{MC:isometry} preserves fibertopes.

\begin{corollary}
\label{C:fibertope}
Let $V, V'$ be  finite-dimensional real vector spaces with inner product, $X,X'$  metric spaces, and $\sigma\colon V\to X$ and $\sigma'\colon V'\to X'$ be submetries. Assume the origins of $V,V'$ are fibers. Let $\varphi\colon X\to X'$ be an isometry sending $\sigma(0)$ to $\sigma'(0)$. Then $\varphi$ takes $\sigma$-images of fibertopes to $\sigma'$-images of fibertopes.
\end{corollary}
\begin{proof}
Since the maps $\conv_0(\cdot)$ given by Corollary \ref{C:convexhull} depend only on the metric structures, we have $\conv_0\circ\varphi=\varphi\circ\conv_0$. Let $K=\conv(F)$ be  a fibertope, where $F=\sigma^{-1}(p)$. Note that $K$ is compact, because $F$ is compact. We have $\sigma(K)=\conv_0(\{p\})$, and $\varphi(\conv_0(\{p\}))=\conv_0(\{\varphi(p)\})$. Thus $\varphi(\sigma(K))$ is the $\sigma'$-image of the fibertope $K'=\conv(F')$, for $F'=\sigma^{-1}(\varphi(p))$.
\end{proof}

\section{Fat sections}
\label{S:fatsections}

\begin{definition}
\label{D:fat}
Given a submetry $\sigma\colon  V\to X$, such that $\{0\}$ is a fiber, we call a subspace $\Sigma\subset V$ a \emph{fat section} for $\sigma$ if $\sigma|_\Sigma\colon \Sigma\to X$ is a submetry.
\end{definition}

In terms of equidistant decompositions (see Subsection \ref{SS:submetries}), Definition \ref{D:fat} can be rephrased as the following conditions on $\Sigma$: it meets all fibers, the decomposition $\F$ of $\Sigma$ given by $\F=\{F\cap\Sigma\mid F \text{ fiber of }\sigma \}$ is equidistant, and the natural bijection $\Sigma/\F\to X$ is an isometry.

In particular, the following are examples of fat sections: 
\begin{itemize}
\item $\sigma\colon  V\to X=V/G$ is the natural quotient map, where $V$ is a polar $G$-representation, and $\Sigma$ is a section. \todo{reference?}
\item More generally, $\sigma\colon  V\to V/G$, where $V$ is a $G$-representation of nontrivial ``copolarity'', and $\Sigma$ is a ``generalized minimal section''. See \cite{GOT04}, and \cite[Section 2.3]{GL14}).
\item $\sigma\colon  V\to V/G$, where $V$ is an effective $G$-representation with non-trivial principal isotropy group $K$, and $\Sigma$ is the fixed-point set $\Sigma=V^K$. See  \cite[page 62]{GL14} and references therein.
\item the fibers of $\sigma\colon V\to X$ form an isoparametric foliation, and $\Sigma$ is the normal space to a regular leaf. In this case, $\dim(\Sigma)=2$ and $\sigma|_\Sigma \colon  \Sigma\to X$ is the quotient map of a dihedral group action. \todo{reference?}
\end{itemize}


%
%
%

We turn to the proof of Theorem \ref{MT:fatsection}, which is a partial generalization of Theorem \ref{T:Kostant} (Kostant's Theorem), and a full generalization of Corollary \ref{C:Kostant}, to the case of fat sections. We note that Theorem \ref{T:Kostant} does not fully generalize to fat sections, see Section \ref{S:examples} for counter-examples.

Since Theorem \ref{MT:fatsection} concerns orthogonal projection onto a section, but Corollary \ref{MC:isometry} concerns intersection with the fat section, an important step is to link these two via Proposition \ref{P:bipolar} (including the Bipolar Theorem). A technical issue arises from the fact that Proposition \ref{P:bipolar} applies only to convex sets \emph{containing the origin}, which we address using Lemmas \ref{L:fixedpoints} and \ref{L:convexfixed}, together with the following:
\begin{lemma}
\label{L:fatfixed}
Let $\sigma\colon V\to X$ a submetry with $\{0\}$ a fiber, and $\Sigma\subset V$ a fat section. Denote by $V_0\subset V$ (respectively $\Sigma_0\subset \Sigma$) the set of all fixed points, that is, the set of $v\in V$ (respectively $v\in\Sigma$) such that $\{v\}$ is a $\sigma$-fiber (respectively $\sigma|_\Sigma$-fiber). Then $V_0=\Sigma_0$.
\end{lemma}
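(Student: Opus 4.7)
The plan is to prove the two inclusions separately. The inclusion $V_0\subset\Sigma_0$ is essentially formal: if $v\in V_0$, then $\{v\}$ is a $\sigma$-fiber, and since the fat section $\Sigma$ meets every fiber, we get $v\in\Sigma$. The $\sigma|_\Sigma$-fiber through $v$ is then $\{v\}\cap\Sigma=\{v\}$, so $v\in\Sigma_0$.

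The nontrivial direction is $\Sigma_0\subset V_0$. Fix $v\in\Sigma_0$ and let $F\subset V$ be the $\sigma$-fiber through $v$, so that $F\cap\Sigma=\{v\}$ by hypothesis. The key step is:
\begin{equation*}
\text{$v$ is a closest point of $F$ to every $y\in\Sigma$.}
\end{equation*}
This follows by chasing distances through the two submetries: since $\sigma$ and $\sigma|_\Sigma$ are submetries into the same space $X$, and $\sigma(F)=\sigma|_\Sigma(F\cap\Sigma)$ is a single point of $X$, we get
\[ d_V(y,F)=d_X(\sigma(y),\sigma(F))=d_\Sigma(y,F\cap\Sigma)=d_V(y,v), \]
where the last equality uses $F\cap\Sigma=\{v\}$ and that the metric on $\Sigma$ is induced from $V$.

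Now let $u\in F$ be arbitrary. Since $\{0\}$ is a fiber, equidistance of fibers gives $\|u\|=\|v\|$. The inequality $\|y-u\|\geq\|y-v\|$ holds for every $y\in\Sigma$, and expanding (cancelling $\|u\|^2=\|v\|^2$) yields $\langle y,v-u\rangle\geq 0$ for all $y\in\Sigma$. Replacing $y$ by $-y$ shows $\langle y,v-u\rangle=0$ for all $y\in\Sigma$, i.e., $u-v\in\Sigma^\perp$. Since $v\in\Sigma$, Pythagoras gives $\|u\|^2=\|v\|^2+\|u-v\|^2$, forcing $u=v$. Thus $F=\{v\}$ and $v\in V_0$.

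The main obstacle is the key step identified above: bridging from the abstract fat section hypothesis (an isometry of leaf spaces) to a concrete geometric fact about closest points inside $V$. Once this bridge is built, the rest is elementary inner-product manipulation; the argument avoids Lemma \ref{L:fixedpoints} and its splitting machinery, which would otherwise have led to circular reasoning with Theorem \ref{MT:fatsection}(a).
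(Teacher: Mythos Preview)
Your proof is correct, and the direction $\Sigma_0\subset V_0$ takes a genuinely different route from the paper's. The paper invokes Corollary~\ref{MC:isometry}: since $\{v\}\subset\Sigma$ is convex, the full fiber $\sigma^{-1}(\sigma(v))\subset V$ must also be convex, and a convex subset of a sphere is a singleton. Your argument instead stays at the level of the definition of submetry, using the identity $d_V(y,F)=d_X(\sigma(y),\sigma(v))=d_\Sigma(y,v)$ for $y\in\Sigma$ to show that $v$ realizes the distance from every $y\in\Sigma$ to $F$, and then finishes with an elementary inner-product computation. This is more self-contained and avoids appealing to the paper's main machinery (Theorem~\ref{MT:quotient}), at the cost of a slightly longer hands-on calculation. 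One minor correction to your closing commentary: the potential circularity you flag is not with Lemma~\ref{L:fixedpoints} but with Corollary~\ref{MC:isometry}; however, there is in fact no circularity in the paper's argument, since Corollary~\ref{MC:isometry} is established independently of Theorem~\ref{MT:fatsection}.
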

\begin{proof}
Since $\sigma|_\Sigma$ is onto $X$, one has $V_0\subset \Sigma_0$.

For the reverse inclusion $\Sigma_0\subset V_0$, use Corollary \ref{MC:isometry}. Indeed, for every $v\in\Sigma_0$, $\sigma|_\Sigma^{-1}(\sigma(v))=\{v\}$ is convex, hence $\sigma^{-1}(\sigma(v))$ is also convex, but since it is contained in a sphere, it must be the singleton $\{v\}$. In other words, $v\in V_0$.
\end{proof}




\begin{proof}[Proof of Theorem \ref{MT:fatsection}]
\ \\ \vspace{-10pt}
\begin{enumerate}[(a)]
\item
First note that $F$ is compact (being a closed subset of a sphere), so $K=\conv(F)$ is a compact, convex, $\sigma$-saturated subset of $V$, by Proposition \ref{P:saturation}.

{\bf Reduction:} We reduce the proof to the case where $K$ contains the origin. 

Indeed, by Lemma \ref{L:convexfixed}, $K$ contains a fixed point $v\in V$, that is, a point $v$ such that $\{v\}$ is a $\sigma$-fiber. By Lemma \ref{L:fatfixed}, $v\in\Sigma$. 
By Lemma \ref{L:fixedpoints}, the translation map $V\to V$, given by $w\mapsto w-v$, sends $\sigma$-fibers to $\sigma$-fibers.

Thus $F-v$ is a fiber such that $K-v=\conv(F-v)$ contains the origin, and, assuming $(K-v)\cap\Sigma=\pi_\Sigma(K-v)=\conv((F-v)\cap\Sigma)$, we obtain
\[K\cap\Sigma=v+ (K-v)\cap\Sigma= v+ \pi_\Sigma(K-v)= \pi_\Sigma(K)\]
and, similarly, $K\cap\Sigma=\conv(F\cap\Sigma)$. This finishes the proof of the {\bf Reduction}.

By Corollary \ref{MC:isometry}, the map $A\mapsto \Sigma\cap A$ is a bijection between origin-containing saturated closed convex subsets of $V$ and $\Sigma$. This bijection preserves the partial order by inclusion.

By Propositions \ref{P:saturation}(c) and \ref{P:bipolar}(a), taking the polar is an order-reversing involution of the set of all origin-containing saturated closed convex subsets of $V$ (respectively $\Sigma$). 

Composing these bijections, the map $A\mapsto (\Sigma\cap A^\circ)^\circ$ is another order-preserving bijection between origin-containing saturated closed convex subsets of $V$ and $\Sigma$. If $A\subset V$ is a closed ball centered at the origin, $(\Sigma\cap A^\circ)^\circ$ is closed ball (in $\Sigma$) of the same radius. Thus, the map $A\mapsto (\Sigma\cap A^\circ)^\circ$ is also an order-preserving bijection between origin-containing saturated \emph{compact} convex subsets of $V$ and $\Sigma$. By Proposition \ref{P:bipolar}(b), this map coincides with $A\mapsto \pi_\Sigma(A)$.

Thus there is $K'\subset V$ compact, convex, saturated, origin-containing, such that $\pi_\Sigma(K')=K\cap\Sigma$. Since $K\cap\Sigma\subset \pi_\Sigma(K)$, we have $K'\subset K$.

Choose any $v\in\Sigma\cap F\subset K\cap\Sigma$, and $v'\in K'$ such that $\pi_\Sigma(v')=v$. Since $v'\in K=\conv(F)$, and $F$ is contained in the sphere of radius $\|v\|$ around the origin, we have $\| v' \|\leq \|v\|$. Thus $v=v'$, and $v\in K'$. Since $K'$ is saturated, we obtain $F\subset K'$, and, since $K'$ is convex, we obtain $K=\conv(F)\subset K'$. Thus $K=K'$, and $K\cap\Sigma=\pi_\Sigma(K)$.

The other equation  $K\cap\Sigma=\conv(F\cap\Sigma)$ follows from the fact that $A\mapsto A\cap\Sigma$ preserves ``fibertopes'', see Corollary \ref{C:fibertope}.

Finally, the last statement is clear: $\pi_\Sigma(F)\subset\pi_\Sigma(\conv(F))=\conv(F\cap\Sigma)$.
\item The inclusion $K\cap\Sigma\subset \pi_\Sigma(K)$ is clear. For the reverse inclusion, let $v\in\pi_\Sigma(K)$. Choose $v'\in K$ with $\pi_\Sigma(v')=v$, and denote by $F_{v'}\subset K$ the $\sigma$-fiber through $v'$. Then, by (a), $v\in\pi_\Sigma(F_{v'})\subset\conv(\Sigma\cap F_{v'})\subset K\cap\Sigma$.
\item The map $A\mapsto A\cap \Sigma$ is a bijection from the set of all $\sigma$-saturated subsets of $V$ to the set of all $\sigma|_\Sigma$-saturated subsets of $\Sigma$, with inverse $B\mapsto \sigma^{-1}(\sigma(B))$. We need to show that both these maps preserve convexity. The first map $A\mapsto A\cap \Sigma$ clearly does. Let $B\subset \Sigma$ be convex and $\sigma|_\Sigma$-saturated. Define $K=\conv(\sigma^{-1}(\sigma(B)))$, which is a convex $\sigma$-saturated set by Proposition \ref{P:saturation}(b). Using part (b), $K\cap\Sigma=\pi_\Sigma(K)=\conv(\pi_\Sigma(\sigma^{-1}(\sigma(B))))$. Using part (a) and convexity of $B$, for each $\sigma$-fiber $F\subset \sigma^{-1}(\sigma(B))$, we have $\pi_\Sigma(F)\subset\conv(F\cap\Sigma)\subset B$, and thus $K\cap\Sigma\subset B$. The reverse inclusion being clear, we obtain $K\cap\Sigma= B$, and therefore, $K=\sigma^{-1}(\sigma(B))$, that is, $\sigma^{-1}(\sigma(B))$ is convex.
\end{enumerate}
\end{proof}

\section{Examples}
\label{S:examples}


%

To illustrate that Theorem \ref{MT:fatsection} applies more generally than Kostant's Theorem, one can point to any representation that is not polar but has non-trivial copolarity in the sense of \cite{GOT04}. Here is one concrete example (see tables in \cite{GKW21} for many more):
\begin{example}
\label{E:copolarity}
Let $2\leq k\leq n-1$ be integers, $G=\O(n)$ be the orthogonal group\footnote{The group $\O(n)$ can be replaced with $\SO(n)$ and the orbits would not change. In other words, the orbits are connected.}, acting diagonally on $V=(\R^n)^{\oplus k}$. Then a principal isotropy is $H=\O(n-k)$ (embedded as a diagonal block), whose fixed point set is $\Sigma=(\R^k)^{\oplus k}$. The normalizer of $H$ in $G$ is $N(H)=\O(k)\times\O(n-k)$ (embedded as block-diagonal matrices). Its action has $\O(n-k)$ as ineffective kernel, so it has the same orbits as the diagonal action of $\O(k)$ on $\Sigma$. The orbit spaces $V/G$ and $\Sigma /N=\Sigma / \O(k)$ are isometric, and $\Sigma$ is a fat section for the natural quotient map $\sigma\colon V\to V/G$.
\end{example}

%

Next we illustrate that Theorem \ref{T:Kostant} does not apply for all submetries with a fat section. That is, the reverse inclusion in Theorem \ref{MT:fatsection}(a) does not always hold.

The easiest counter-examples can be found considering polar representations with \emph{disconnected} orbits, thus also illustrating the necessity of the hypothesis that orbits are connected in Theorem \ref{T:Kostant}. At the most extreme, one can take $G\subset\O(V)$ finite and non-trivial. Then $\Sigma=V$ is a section, so $\pi_\Sigma$ is the identity map, and $\pi_\Sigma(G\cdot v)=G\cdot v$ is not convex unless it is a single point. Similar counter-examples with $G$ infinite are also easily constructed.

A more interesting example, with connected orbits:
\begin{example}
Let $V, G, \Sigma, \ldots$ as in Example \ref{E:copolarity}. Assume $k>\frac{2n-1}{3}$. Then, for a generic $v\in \Sigma$, $\pi_\Sigma(G\cdot v)$ is strictly contained in the orbitope $\O(k)\cdot v $. Indeed, since the $\O(k)$-representation $(\R^k)^{\oplus k}$ is of real type, the generic orbitope has non-empty interior, that is, dimension $k^2$ (see \cite[pages 279-280]{SSS11}). On the other hand, the principal $G$-orbits have dimension $\dim\O(n)-\dim\O(n-k)=kn-k(k+1)/2$. When $k>\frac{2n-1}{3}$, this is smaller than $k^2$.
\end{example}

\todo{add some inhomogeneous examples?}

\subsection*{Conflict of Interest}
The author has no conflict of interest to declare that are relevant to this article.
\subsection*{Data availability statement}
Not applicable.
\bibliography{ref}
\bibliographystyle{alpha}

\end{document}